\newtheorem{theorem}{Theorem}[section]
\newtheorem{lemma}[theorem]{Lemma}
\theoremstyle{definition}
\newtheorem{definition}[theorem]{Definition}
\theoremstyle{remark}
\newtheorem{remark}[theorem]{Remark}
\numberwithin{equation}{section}
\title{The VC-dimension and point configurations in ${\Bbb F}_q^2$}
\author{D. Fitzpatrick} 
\author{A. Iosevich}
\author{B. McDonald} 
\author{E. L. Wyman}
\thanks{The second listed author's research was supported in part by the National Science Foundation grant no. HDR TRIPODS - 1934962. The fourth listed author's research was supported in part by the 2021 Simons Travel Grant.}
\begin{document}

\begin{abstract} Let $X$ be a set and ${\mathcal H}$ a collection of functions from $X$ to $\{0,1\}$. We say that ${\mathcal H}$ shatters a finite set $C \subset X$ if the restriction of ${\mathcal H}$ yields every possible function from $C$ to $\{0,1\}$. The VC-dimension of ${\mathcal H}$ is the largest number $d$ such that there exists a set of size $d$ shattered by ${\mathcal H}$, and no set of size $d+1$ is shattered by ${\mathcal H}$. Vapnik and Chervonenkis introduced this idea in the early 70s in the context of learning theory, and this idea has also had a significant impact on other areas of mathematics. In this paper we study the VC-dimension of a class of functions ${\mathcal H}$ defined on ${\Bbb F}_q^d$, the $d$-dimensional vector space over the finite field with $q$ elements. Define 
$$ {\mathcal H}^d_t=\{h_y(x): y \in {\Bbb F}_q^d \},$$ where for $x \in {\Bbb F}_q^d$, $h_y(x)=1$ if $||x-y||=t$, and $0$ otherwise, where here, and throughout, $||x||=x_1^2+x_2^2+\dots+x_d^2$. Here $t \in {\Bbb F}_q$, $t \not=0$. Define ${\mathcal H}_t^d(E)$ the same way with respect to $E \subset {\Bbb F}_q^d$. The learning task here is to find a sphere of radius $t$ centered at some point $y \in E$ unknown to the learner. The learning process consists of taking random samples of elements of $E$ of sufficiently large size. 

We are going to prove that when $d=2$, and $|E| \ge Cq^{\frac{15}{8}}$, the VC-dimension of ${\mathcal H}^2_t(E)$ is equal to $3$. This leads to an intricate configuration problem which is interesting in its own right and requires a new approach. 
\end{abstract}

\maketitle

\section{Introduction} 

\vskip.125in 

The purpose of this paper is to study the Vapnik-Chervonenkis dimension in the context of a naturally arising family of functions on subsets of the two-dimensional vector space over the finite field with $q$ elements, denoted by ${\Bbb F}_q^d$. Let us begin by recalling some definitions and basic results (see e.g. \cite{DS14}, Chapter 6). 

\begin{definition} \label{shatteringdef} Let $X$ be a set and ${\mathcal H}$ a collection of functions from $X$ to $\{0,1\}$. We say that ${\mathcal H}$ shatters a finite set $C \subset X$ if the restriction of ${\mathcal H}$ to $C$ yields every possible function from $C$ to $\{0,1\}$. \end{definition} 

\vskip.125in 

\begin{definition} \label{vcdimdef} Let $X$ and ${\mathcal H}$ be as above. We say that a non-negative integer $n$ is the VC-dimension of ${\mathcal H}$ if there exists a set $C \subset X$ of size $n$ that is shattered by ${\mathcal H}$, and no subset of $X$ of size $n+1$ is shattered by ${\mathcal H}$. \end{definition} 

\vskip.125in 

We are going to work with a class of functions ${\mathcal H}^2_t$, where $t \not=0$. Let $X={\Bbb F}_q^2$, and define 
\begin{equation} \label{functionclassdef} {\mathcal H}_t^2=\{h_y: y \in {\Bbb F}_q^2 \}, \end{equation} where $y \in {\Bbb F}_q^2$, and $h_y(x)=1$ if $||x-y||=t$, and $0$ otherwise, where here, and throughout, $||x||=x_1^2+x_2^2$. Let ${\mathcal H}_t^2(E)$ be defined the same way, but with respect to a set $E \subset {\Bbb F}_q^2$ i.e 
$$ {\mathcal H}^2_t(E)=\{h_y: y \in E\},$$ where $h_y(x)=1$ if $||x-y||=t$ ($x \in E$), and $0$ otherwise. 

\vskip.125in 

Our main result is the following. 

\begin{theorem} \label{main} Let ${\mathcal H}^2_t(E)$ be defined as above with respect to $E \subset {\Bbb F}_q^2$, $t \not=0$. If 
$|E| \ge C q^{\frac{15}{8}}$, with a sufficiently large constant $C$, then the VC-dimension of ${\mathcal H}^2_t(E)$ is equal to $3$. \end{theorem}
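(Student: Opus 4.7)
\textbf{Upper bound} (VC-dim $\leq 3$). To show no four points can be shattered, even by $\mathcal{H}_t^2(\mathbb{F}_q^2)$, I would argue as follows. If three of the four points are collinear, a short calculation shows that for $t\neq 0$ there is no $y\in\mathbb{F}_q^2$ with $||y-p_i||=t$ for all three (the induced linear system is inconsistent, in both the isotropic and non-isotropic subcases), so the pattern $\{1,2,3\}$ cannot be realized. If no three are collinear, then for the triple $p_1,p_2,p_3$ the perpendicular bisectors $L_{12}$ and $L_{23}$ meet in a unique point --- the circumcenter --- so every center $y$ with $||y-p_i||=t$ for $i=1,2,3$ is uniquely determined. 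This forces the centers realizing $\{1,2,3,4\}$ and $\{1,2,3\}$ to coincide, contradicting distinct patterns.

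\textbf{Lower bound} (VC-dim $\geq 3$). I would construct a shattered triple $\{p_1,p_2,p_3\}$ together with eight distinct centers $y_S\in E$, one per $S\subseteq\{1,2,3\}$. The patterns $\emptyset$ and $\{i\}$ are realized by generic triples once $|E|\gg q$, so the main task is to realize the triple pattern $\{1,2,3\}$ and the three pair patterns $\{i,j\}$ simultaneously. Fix a candidate $y_{123}\in E$ and translate so that $y_{123}=0$; then $p_1,p_2,p_3$ must lie on the sphere $S:=\{x\in\mathbb{F}_q^2 : ||x||=t\}$. A midpoint computation shows that for distinct $p_i,p_j\in S$ in generic position, the set $\{y : ||y-p_i||=||y-p_j||=t\}$ consists of exactly the two points $0$ and $p_i+p_j$; hence the pair pattern $\{i,j\}$ is realized by a center in $E$ precisely when $p_i+p_j\in E$ (with the generic non-incidence $||p_i+p_j-p_k||\neq t$). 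The task therefore reduces to counting triples $(p_1,p_2,p_3)\in S^3$ whose three pairwise sums all lie in $E$, and then summing over $y_{123}\in E$.

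I would estimate this count via the additive Fourier transform on $\mathbb{F}_q^2$. Opening the three indicator functions $1_E$ yields
\[
N = q^{-6}\sum_{\xi_1,\xi_2,\xi_3}\widehat{1_E}(\xi_1)\widehat{1_E}(\xi_2)\widehat{1_E}(\xi_3)\,\widehat{1_S}(\xi_1+\xi_2)\widehat{1_S}(\xi_1+\xi_3)\widehat{1_S}(\xi_2+\xi_3),
\]
whose diagonal main term is of order $|E|^3/q^3$ (using $\widehat{1_S}(0)=q+O(1)$ and the Kloosterman/Gauss sum bound $|\widehat{1_S}(\xi)|\lesssim q^{1/2}$ for $\xi\neq 0$). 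Summed over $y_{123}\in E$, this yields a main term of order $|E|^4/q^3$.

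\textbf{Main obstacle.} Bounding the trilinear error tightly enough to beat this main term at $|E|\geq Cq^{15/8}$ is the technical heart of the paper. A naive $\ell^1$-Fourier estimate yields only $|E|\gg q^2$, which is far too weak. One must split the error according to which of the subsums $\xi_1+\xi_2$, $\xi_1+\xi_3$, $\xi_2+\xi_3$ vanishes, and combine Plancherel's identity with sphere $L^4$-type estimates such as $\sum_\xi |\widehat{1_S}(\xi)|^4 = O(q^4)$ (which follows from the elementary fact $|S\cap(S+v)|\leq 2$ for $v\neq 0$), together with restriction-style bounds for $S$. Balancing these estimates should produce the $15/8$ exponent. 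The residual degeneracies --- coincidences $p_i+p_j=0$, failures of $||p_i+p_j-p_k||\neq t$, and the need to extract centers for the $\emptyset$ and singleton patterns from $E$ --- contribute only lower-order counts and are absorbed.
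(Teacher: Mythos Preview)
Your upper bound argument is essentially the paper's (with extra care about the collinear case), so that part is fine.

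Your lower bound strategy, however, is \emph{not} the paper's, and as written it has real gaps. First, a setup error: the shattered triple must lie in $E$, so you need $(p_1,p_2,p_3)\in (E\cap S)^3$, not $S^3$. Your trilinear expression for $N$ carries only three factors $\widehat{1_E}$ (one per pairwise sum) and omits the membership constraints $p_i\in E$; with those added the structure changes substantially. Second, and more importantly, you do not actually bound the error---you assert that ``balancing these estimates should produce the $15/8$ exponent,'' but give no mechanism by which $15/8$ emerges from a trilinear Fourier computation. That is the entire content of the theorem, and it is left as a hope.

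The paper's route is completely different and avoids any delicate trilinear Fourier analysis. It is a two--step pigeonhole construction. Step one: by Iosevich--Rudnev there is a direction $v$ with $\|v\|=t$ and $|E\cap(E-v)|\ge \tfrac12|E|^2q^{-2}$; when $|E|\ge Cq^{15/8}$ this intersection has size $\ge 4q^{7/4}$. Step two (the ``rhombus lemma''): in any set $E'$ of size $\ge 4q^{7/4}$ one finds a rhombus $x,y,z,w$ with all four sides of length $t$ and no side parallel to $v$---this is proved by a further pigeonhole on directions (find a popular $u$ with $\|u\|=t$, pass to the set of first coordinates, and apply Iosevich--Rudnev again; the threshold $7/4$ is exactly what makes this inner set have size $\ge 4q^{3/2}$). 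Applying the rhombus lemma to $E'=E\cap(E-v)$ and then translating by $v$ produces, explicitly, $y^{123}=y$, $x^1=x$, $x^2=y+v$, $x^3=z$, $y^{12}=x+v$, $y^{23}=z+v$, $y^{13}=w$, all in $E$. The non--incidences $\|y^{ij}-x^k\|\neq t$ for $k\notin\{i,j\}$ are automatic because three points determine a circle and $y^{ij}\neq y^{123}$ by construction. The singleton centers $y^1,y^2,y^3$ and $y^0$ are obtained by first passing to the positive--proportion subset of $E$ where every point has $t$--degree $\ge 100$ (a short Cauchy--Schwarz/path--count argument), and running the whole construction there.

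So the exponent $15/8$ in the paper is purely arithmetic: it is the composition of two pigeonhole thresholds, $q^{15/8}\to q^{7/4}\to q^{3/2}$, not the output of balancing Fourier error terms. Your Fourier approach might in principle be made to work, but you would need to (i) put the $p_i$ back into $E$, and (ii) actually carry out the error analysis and show it yields $15/8$; neither is done here.
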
 

\vskip.125in 

\begin{remark} It is interesting to note since $|{\mathcal H}_t^2(E)|=|E|$, it is clear that the VC-dimension of ${\mathcal H}_t^2(E) \leq log_2(|E|)$, so $3$ is a clear improvement over this general estimate. It is not difficult to see that the VC-dimension is $<4$, so the real challenge to establish the $3$ bound. Moreover, our result says that in this sense, the learning complexity of subsets of ${\Bbb F}_q^2$ of size $>Cq^{\frac{15}{8}}$ is the same as that of the whole vector space ${\Bbb F}_q^2$. \end{remark} 

\vskip.125in 

\begin{remark} The higher dimensional case of this problem is somewhat easier from the point of view of the underlying Fourier analytic techniques, but is more complex in terms of geometry. We shall address this issue in a sequel (\cite{GIJMMSW21}). \end{remark} 

\vskip.125in 

We can prove that the VC-dimension is at least $2$ under a much weaker assumption. 

\begin{theorem} \label{mainlame} Let ${\mathcal H}^2_t(E)$ be defined as above with respect to $E \subset {\Bbb F}_q^2$, $t \not=0$. If $|E| \ge Cq^{\frac {3}{2}}$, with a sufficiently large constant $C$, then the VC-dimension of ${\mathcal H}^2_t(E)$ is at least $2$ and no more than $3$. \end{theorem}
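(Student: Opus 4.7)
The plan splits into the upper bound (VC-dimension $\le 3$) and the lower bound (VC-dimension $\ge 2$). The upper bound requires no hypothesis on $|E|$ beyond $|E| \ge 4$; the lower bound is where $|E| \ge Cq^{3/2}$ enters.

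\emph{Upper bound.} I would argue by contradiction: suppose $\{x_1,x_2,x_3,x_4\}\subseteq E$ is shattered, and let $y,y'\in E$ be the witnesses for the subsets $\{x_1,x_2,x_3\}$ and $\{x_1,x_2,x_3,x_4\}$ respectively. Both satisfy $||y-x_i||=t$ for $i=1,2,3$. Subtracting the $i=1$ equation from the other two linearizes the system into two affine linear equations whose coefficient rows are $x_2-x_1$ and $x_3-x_1$. If $x_1,x_2,x_3$ are not collinear, the coefficient matrix is invertible, forcing $y=y'$, so the conditions $||y-x_4||\ne t$ and $||y'-x_4||=t$ contradict each other. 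If they are collinear, writing $x_3-x_1=\lambda(x_2-x_1)$, a short direct computation shows the two equations are consistent iff $(1-\lambda)||x_2-x_1||=0$; for distinct $x_i$'s this forces the line to be isotropic ($||x_2-x_1||=0$), and in that case the equidistant locus collapses onto the isotropic line through $x_1$, where $||y-x_1||\equiv 0\ne t$. Either way the required witness fails to exist in $\mathbb{F}_q^2$, contradicting shattering.

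\emph{Lower bound.} The engine is the Iosevich--Rudnev incidence bound
\[
\nu_t(E):=|\{(x,y)\in E\times E : ||x-y||=t\}|=\frac{|E|^2}{q}+O\!\bigl(q^{1/2}|E|\bigr),
\]
which gives $\nu_t(E)=\Theta(|E|^2/q)$ once $|E|\ge Cq^{3/2}$ with $C$ large. Writing $C_x:=\{y\in E : ||y-x||=t\}$, I would split $E$ into low-degree points ($|C_x|\le 2$) and high-degree points $E_{\mathrm{high}}:=\{x\in E : |C_x|\ge 3\}$. The low-degree points contribute at most $2|E|$ to $\nu_t(E)$, so $\sum_{x\in E_{\mathrm{high}}}|C_x|\ge \nu_t(E)/2\gtrsim |E|^2/q$. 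Cauchy--Schwarz then gives
\[
\sum_{y\in E}|C_y\cap E_{\mathrm{high}}|^2\ \ge\ \frac{1}{|E|}\Bigl(\sum_{x\in E_{\mathrm{high}}}|C_x|\Bigr)^2\ \gtrsim\ \frac{|E|^3}{q^2},
\]
which dominates the diagonal contribution $\sum_y|C_y\cap E_{\mathrm{high}}|\le \nu_t(E)\lesssim |E|^2/q$ whenever $|E|\gg q$. Subtracting the diagonal produces distinct $x_1,x_2\in E_{\mathrm{high}}$ sharing a common witness $y_{12}\in E$ with $||y_{12}-x_1||=||y_{12}-x_2||=t$.

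To complete the shattering of $\{x_1,x_2\}$ I need three more witnesses in $E$: $y_1\in C_{x_1}\setminus C_{x_2}$, which exists because $|C_{x_1}|\ge 3$ while the intersection $C_{x_1}\cap C_{x_2}$ has at most $2$ points (a separate check in the isotropic case $||x_1-x_2||=0$ is needed, where the two radius-$t$ circles turn out to be disjoint since the isotropic line is itself a distance-$0$ set); a symmetric $y_2\in C_{x_2}\setminus C_{x_1}$; and $y_\emptyset\in E\setminus(C_{x_1}\cup C_{x_2})$, immediate from $|C_{x_1}\cup C_{x_2}|\le 2q\ll |E|$. I expect the main technical obstacle to be this uniform handling of isotropic configurations (where $||x_i-x_j||=0$ breaks the Euclidean intuition that two equal-radius circles meet in $\le 2$ points), together with verifying that the Cauchy--Schwarz gain survives precisely at the $q^{3/2}$ threshold dictated by the Iosevich--Rudnev error term.
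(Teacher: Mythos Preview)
Your proof is correct. The upper bound matches the paper's one-line ``three points determine a circle'' argument; your case analysis for collinear $x_1,x_2,x_3$ is in fact vacuous, since three distinct points on a circle of nonzero radius in $\mathbb{F}_q^2$ are automatically non-collinear (a line meets such a circle in at most two points).

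For the lower bound your route is genuinely different from the paper's and more elementary. The paper invokes the chain-counting theorem of Bennett--Chapman--Covert--Hart--Iosevich--Pakianathan: it shows there are $\gtrsim |E|^5/q^4$ tuples $(z^1,\dots,z^5)\in E^5$ with $\|z^{i+1}-z^i\|=t$ for $1\le i\le 4$, then separately bounds the degenerate ones (those with $\|z^1-z^4\|=t$ or $\|z^2-z^5\|=t$) and subtracts; a surviving non-degenerate chain furnishes $x^1=z^2$, $x^2=z^4$, $y^{12}=z^3$, $y^1=z^1$, $y^2=z^5$. You avoid the chain theorem entirely: Iosevich--Rudnev plus a pigeonhole produces many vertices of degree $\ge 3$, Cauchy--Schwarz finds two of them sharing a common neighbor $y_{12}$, and the geometric fact $|C_{x_1}\cap C_{x_2}|\le 2$ for distinct $x_1,x_2$ (valid whenever $t\ne 0$, with the isotropic-separation case actually giving empty intersection) immediately supplies the remaining witnesses $y_1,y_2$. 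Your approach uses strictly less machinery; the paper's has the advantage that the chain theorem is already in hand for the harder main theorem, so reusing it here costs little.
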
 

\vskip.125in 

\begin{remark} The discrepancy between the size thresholds in Theorem \ref{main} and Theorem \ref{mainlame} raises the question of whether the VC-dimension is, in general, $<3$, if $|E|$ is much smaller than $Cq^{\frac{15}{8}}$. We do not know the answer to this question and hope to resolve it in the sequel. \end{remark} 

\vskip.125in 

\section{Learning theory perspective on Theorem \ref{main}} 

\vskip.125in 

From the point of view of learning theory, it is interesting to ask what the ``learning task" is in the situation at hand. It can be described as follows. We are asked to construct a function $f:E \to \{0,1\}$, $E \subset {\Bbb F}_q^2$, that is equal to $1$ on a sphere of radius $t$ centered at some $y^{*} \in E$, but we do not know the value of $y^{*}$. The fundamental theorem of statistical learning tells us that if the VC-dimension of ${\mathcal H}_t^2(E)$ is finite, we can find an arbitrarily accurate hypothesis (element of ${\mathcal H}^2_t(E)$) with arbitrarily high probability if we consider a randomly chosen sampling training set of sufficiently large size. 

We shall now make these concepts precise. Let us recall some more basic notions.

\begin{definition} Given a set $X$, a probability distribution $D$ and a labeling function $f: X \to \{0,1\}$, let $h$ be a hypothesis, i.e $h:X \to \{0,1\}$, and define 
$$ L_{D,f}(h)={\Bbb P}_{x \sim D}[h(x) \not=f(x)],$$ where ${\Bbb P}_{x \sim D}$ means that $x$ is being sampled according to the probability distribution $D$. 
\end{definition} 

\vskip.125in 

\begin{definition} A hypothesis class ${\mathcal H}$ is PAC learnable if there exist a function 
$$m_{{\mathcal H}}: {(0,1)}^2 \to {\Bbb N}$$ and a learning algorithm with the following property: For every $\epsilon, \delta \in (0,1)$, for every distribution $D$ over $X$, and for every labeling function $f: X \to \{0,1\},$ if the realizability assumption holds with respect to $X$, $D$, $f$, then when running the learning algorithm on $m \ge m_{{\mathcal H}}(\epsilon, \delta)$ i.i.d. examples generated by $D$, and labeled by $f$, the algorithm returns
a hypothesis $h$ such that, with probability of at least $1-\delta$ (over the choice of the examples), 
$$L_{D,f}(h) \leq \epsilon.$$
\end{definition} 

\vskip.125in 

The following theorem is a quantitative version of the fundamental theorem of machine learning, and provides the link between VC-dimension and learnability (see \cite{DS14}).

\begin{theorem} Let ${\mathcal H}$ be a collection of hypotheses on a set $X$. Then ${\mathcal H}$ has a finite VC-dimension if and only if ${\mathcal H}$ is PAC learnable. Moreover, if the VC-dimension of ${\mathcal H}$ is equal to $n$, then ${\mathcal H}$ is PAC learnable and there exist constants $C_1, C_2$ such that 
$$ C_1 \frac{n+\log \left(\frac{1}{\delta} \right)}{\epsilon} \leq m_{{\mathcal H}}(\epsilon, \delta) \leq C_2 \frac{n \log \left(\frac{1}{\epsilon} \right)+
\log \left(\frac{1}{\delta} \right)}{\epsilon}.$$

\end{theorem}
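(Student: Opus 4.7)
The plan is to prove the two halves of the fundamental theorem separately, using the Sauer--Shelah--Perles lemma in both directions to turn the VC-dimension hypothesis into quantitative combinatorial control on the effective size of the hypothesis class restricted to a finite sample.

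For the upper bound $m_{\mathcal{H}}(\epsilon,\delta) \leq C_2(n\log(1/\epsilon) + \log(1/\delta))/\epsilon$, I would first establish the Sauer--Shelah--Perles lemma: if the VC-dimension of $\mathcal{H}$ equals $n$, then for any $m$-element set $C \subset X$ we have
$$|\{h|_C : h \in \mathcal{H}\}| \;\leq\; \sum_{i=0}^{n} \binom{m}{i} \;\leq\; (em/n)^n.$$
This follows from a standard shifting/compression argument. The learning algorithm is ERM: output any $h \in \mathcal{H}$ consistent with the i.i.d.\ labeled sample. Under the realizability assumption, any hypothesis with $L_{D,f}(h) > \epsilon$ survives $m$ independent samples with probability at most $(1-\epsilon)^m \leq e^{-m\epsilon}$. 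Because $\mathcal{H}$ may be infinite, a direct union bound fails, and here the classical double-sampling/symmetrization trick of Vapnik--Chervonenkis converts the required probability estimate into a union bound over the at-most-$(2em/n)^n$ restrictions of $\mathcal{H}$ to an enlarged sample of $2m$ points. Setting the resulting failure probability equal to $\delta$ and solving for $m$ produces the stated bound.

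For the lower bound $m_{\mathcal{H}}(\epsilon,\delta) \geq C_1(n + \log(1/\delta))/\epsilon$, I would fix a shattered set $C = \{c_1, \dots, c_n\}$ and consider distributions of the form $D(c_0) = 1 - 8\epsilon$ and $D(c_i) = 8\epsilon/n$ for $i \geq 1$, where $c_0$ is an auxiliary point with a deterministic label. Because $C$ is shattered, every $f : C \to \{0,1\}$ is realized by some $h \in \mathcal{H}$; place a uniform prior over these $2^n$ labeling functions. A learner that has seen fewer than $n/(32\epsilon)$ samples has, in expectation, missed at least $n/4$ elements of $C$, and on each unseen element its output is independent of the random true label, so its expected error is at least $\epsilon$. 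A Markov-inequality conversion then yields the $n/\epsilon$ contribution. The $\log(1/\delta)/\epsilon$ term is obtained by a separate binary-hypothesis-testing argument between two hypotheses disagreeing on a set of $D$-mass exactly $\epsilon$, reducing the problem to distinguishing two Bernoulli distributions.

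The main obstacle is obtaining the realizable rate of $1/\epsilon$ rather than the agnostic $1/\epsilon^2$ in the upper bound, with the correct polynomial dependence on $n$. The naive uniform convergence argument via Hoeffding loses a factor of $1/\epsilon$; the sharper bound crucially uses the $(1-\epsilon)^m$ tail estimate available under realizability, and this forces one to symmetrize the probability before applying Sauer--Shelah rather than after. One should also note that the residual $\log(1/\epsilon)$ gap between the matching upper and lower bounds is a genuine feature of the theorem as stated and cannot be removed by these elementary methods; closing it requires more refined algorithms such as the one-inclusion graph procedure of Haussler--Littlestone--Warmuth.
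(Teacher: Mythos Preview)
The paper does not prove this theorem at all: it is quoted as background from \cite{DS14} (Shalev-Shwartz and Ben-David), with the sentence ``The following theorem is a quantitative version of the fundamental theorem of machine learning, and provides the link between VC-dimension and learnability (see \cite{DS14})'' serving in lieu of a proof. So there is nothing in the paper to compare your proposal against; the authors simply cite the result and move on to applying it to ${\mathcal H}^2_t(E)$.

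That said, your sketch is the standard textbook argument and is correct in outline. The Sauer--Shelah bound, ERM under realizability, the Vapnik--Chervonenkis double-sampling symmetrization for the upper bound, and the random-labels-on-a-shattered-set construction plus a two-point testing argument for the lower bound are exactly the ingredients found in \cite{DS14}. Your remark about the residual $\log(1/\epsilon)$ gap and the Haussler--Littlestone--Warmuth one-inclusion graph is also accurate. If you were writing this up in full you would need to be careful with constants in the lower-bound construction (your mass allocation $8\epsilon$ on the shattered points and the ``missed at least $n/4$ elements'' claim need the parameters tuned consistently), but as a proof plan there is no genuine gap.
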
 

\vskip.125in 

Going back to the learning task associated with ${\mathcal H}_t^2(E)$, as in Theorem \ref{main}, suppose that $h_y$ is a ``wrong" hypothesis, i.e 
$y \not=y^{*}$, where $f=h_{y^{*}}$ is the true labeling function. Moreover, assume that 
$$ \{z \in {\Bbb F}_q^2: ||z-y||=t \} \cap \{z \in {\Bbb F}_q^d: ||z-y^{*}||=t \}=\emptyset.$$

Since the size of a sphere of non-zero radius in ${\Bbb F}_q^2$ is $q$ plus lower order terms, and $D$ is the uniform probability distribution on 
${\Bbb F}_q^d$, 
$$ L_{D,f}(h) \leq \frac{1}{q} \left(1+o(1) \right),$$ so one must choose $\epsilon$ just slightly less than $\frac{1}{q}$ to make the results meaningful. It follows by taking 
$\delta=\epsilon$ that we need to consider random samples of size $\approx C q \log(q)$ with sufficiently large $C$ to execute the desired algorithm. Moreover, since $3$ points determine a circle effectively means that if $\epsilon$ is just slightly less than $\frac{1}{q}$, then $L_{D,f}(h)=0$. 

\vskip.25in 

\section{Proof of Theorem \ref{mainlame}}

\vskip.25in 


We warm up to Theorem \ref{mainlame} by first showing the VC-dimension of ${\mathcal H}^2_t(E)$ is at least $1$.

The existence of a set of size $1$ that is shattered by ${\mathcal H}^2_t(E)$ means that there exists $x \in E$ with the property that there exist $y \in E$ such that $||x-y||=t$, and $y' \in E$ such that $||x-y|| \not=t$. To find $x$ and $y$, we require a result of the first listed author and Misha Rudnev \cite{IR07}, stated below for convenience.

\begin{theorem}\label{IR07} (\cite{IR07}) \label{IR07}Let $E \subset {\Bbb F}_q^d$, $d \ge 2$. Then if $t \not=0$, 
\begin{equation} \label{irformula} |\{(x,y) \in E: ||x-y||=t \}|={|E|}^2q^{-1}+{\mathcal D}_t(E), \end{equation} where 
$$ |{\mathcal D}_t(E)| \leq 2q^{\frac{d-1}{2}}|E|.$$ 
In particular, if $|E|> 2q^{\frac{d+1}{2}}$, the left hand side of (\ref{irformula}) is positive. Moreover, if $|E| \geq 4 q^{\frac{d+1}{2}}$, then the left hand side of (\ref{irformula}) is $\ge \frac{{|E|}^2}{2q}$. 
\end{theorem}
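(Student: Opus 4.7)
The plan is to attack the incidence count by Fourier analysis on $\mathbb{F}_q^d$, letting a nontrivial additive character $\chi$ of $\mathbb{F}_q$ serve as the kernel. Writing $E(x)$ for the indicator function of $E$ and $S_t(x) = \mathbf{1}_{\|x\|=t}$, one has
\begin{equation*}
|\{(x,y) \in E \times E : \|x-y\| = t\}| = \sum_{x,y} E(x) E(y) S_t(x-y).
\end{equation*}
Using the Fourier expansion $S_t(x-y) = q^{-d} \sum_\xi \widehat{S_t}(\xi)\, \chi((x-y)\cdot \xi)$ and recognizing the sums over $x$ and $y$ as $\widehat{E}(-\xi)$ and $\widehat{E}(\xi)$ respectively, I would recast the count as
\begin{equation*}
q^{-d} \sum_\xi \widehat{S_t}(\xi)\, |\widehat{E}(\xi)|^2.
\end{equation*}

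Next, I would isolate the $\xi = 0$ term, which equals $q^{-d} |S_t| |E|^2$. Because the sphere $\{\|x\|=t\}$ for $t \neq 0$ has exactly $q^{d-1}(1 + o(1))$ points (with an explicit formula involving the quadratic character), this gives the advertised main term $|E|^2 q^{-1}$, up to a contribution that is harmlessly small enough to absorb into $\mathcal{D}_t(E)$.

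For the error term, the crucial input is the Fourier decay bound
\begin{equation*}
|\widehat{S_t}(\xi)| \leq 2 q^{(d-1)/2} \quad \text{for all } \xi \neq 0.
\end{equation*}
I would derive this by writing $S_t(x) = q^{-1} \sum_{s \in \mathbb{F}_q} \chi(s(\|x\|-t))$, completing the square in the inner quadratic sum over $x$, and recognizing the resulting one-variable sums as classical quadratic Gauss sums of modulus $\sqrt{q}$. The $s=0$ term contributes only to $\xi = 0$, and the remaining $q-1$ terms each carry a $q^{d/2}$ Gauss-sum factor; the outer average over $s \neq 0$ supplies the extra $q^{-1/2}$ of cancellation needed to reach the $q^{(d-1)/2}$ bound. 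This is the step requiring the most care, and is where the dependence on $d$ enters.

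With the sphere bound in hand, the error estimate closes easily via Plancherel:
\begin{equation*}
\left| q^{-d} \sum_{\xi \neq 0} \widehat{S_t}(\xi)\, |\widehat{E}(\xi)|^2 \right| \leq q^{-d} \cdot 2q^{(d-1)/2} \cdot \sum_\xi |\widehat{E}(\xi)|^2 = 2 q^{(d-1)/2} |E|,
\end{equation*}
using $\sum_\xi |\widehat{E}(\xi)|^2 = q^d |E|$. The two consequences stated in the theorem then follow by comparing $|E|^2 q^{-1}$ with $2 q^{(d-1)/2} |E|$: positivity needs $|E| > 2 q^{(d+1)/2}$, and the lower bound $|E|^2/(2q)$ is forced as soon as $|E| \geq 4 q^{(d+1)/2}$. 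The main obstacle throughout is the Gauss-sum calculation for $\widehat{S_t}$; once that is in place the rest is bookkeeping.
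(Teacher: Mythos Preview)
Your proposal is correct and follows exactly the standard Fourier-analytic argument from the original Iosevich--Rudnev paper \cite{IR07}: expand the count via characters, separate the zero frequency to obtain the main term $|E|^2/q$, and bound the nonzero frequencies using the Gauss-sum estimate $|\widehat{S_t}(\xi)| \le 2q^{(d-1)/2}$ together with Plancherel. Note that the present paper does not itself prove this theorem---it is quoted as a known result---so there is no in-paper proof to compare against; your sketch is precisely what one finds in the cited reference.
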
 

By Theorem \ref{IR07}, since $|E| \ge 4q^{\frac{3}{2}}$, there exist $x,y \in E$ such that $||x-y||=t$. Since $|E|$ is much greater than $q$, there also exists $y'$ such that $||x-y'|| \not=t$. Hence, the VC-dimension of ${\mathcal H}^2_t(E)$ is at least $1$.

%

\vskip.125in 


To prove Theorem \ref{mainlame}, we must show that there exists $\{x^1, x^2\} \subset E$ that is shattered by ${\mathcal H}^2_t(E)$. This means that there exist $y^1, y^2, y^{12}, y^{0} \in E$ such that the following hold: \begin{itemize} 
\item i) $||x^1-y^{12}||=||x^2-y^{12}||=t$. 
\item ii) $||x^1-y^1||=t$, $||x^2-y^1|| \not=t$. 
\item iii) $||x^2-y^2||=t$, $||x^1-y^2|| \not=t$. 
\item iv) $||x^1-y^{0}|| \not=t$, $||x^2-y^0|| \not=t$. 
\end{itemize} 

\vskip.125in 

Thus proving the existence of a set $\{x^1,x^2\}$ that is shattered by ${\mathcal H}^2_t(E)$ amounts to establishing the existence of a chain $z^1, z^2, z^3, z^4, z^5 \in E$, such that $||z^{j+1}-z^j||=t$, $j=1,2,3,4$, $||z^1-z^4|| \not=t$, $||z^2-z^5|| \not=t$. Here, $x^1 = z^2$, $x^2 = z^4$, $y^{12} = z^3$, $y^{1} = z^1$, and $y^2 = z^5$ (see figure \ref{vcdim2chain}). Since $|E| \gg q$, we may select $y^0$ from $E$ outside the union of the circles of radius $t$ centered at $x^1$ and $x^2$.
\begin{figure}
\includegraphics[width=0.75\textwidth]{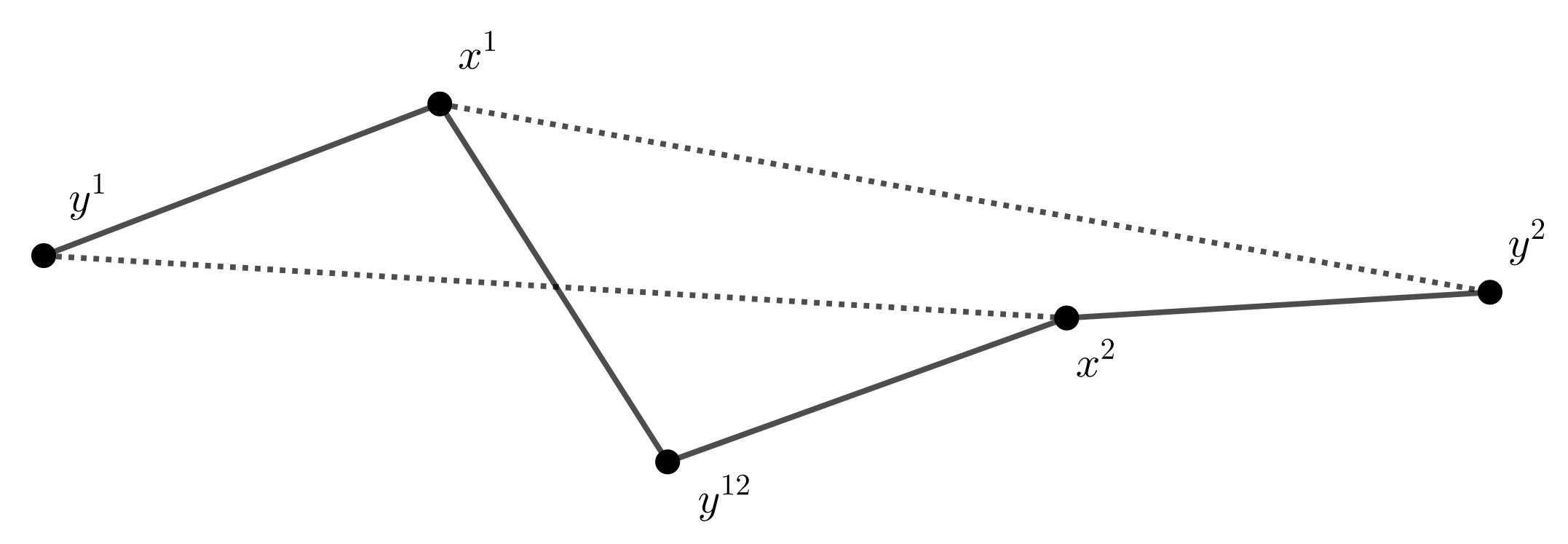}
\caption{Points adjoined by a solid line are separated by a distance $t$, and those joined by a dotted line are separated by a distance $\neq t$.}
\label{vcdim2chain}
\end{figure}

We shall need the following result due to Bennett, Chapman, Covert, Hart, the first listed author, and Pakianathan (\cite{BCCHIP16}, Theorem 1.1). 

\begin{theorem}[\cite{BCCHIP16}] \label{BCCHIP16} Let $E \subset {\Bbb F}_q^d$, $d \ge 2$, and $|E|>\frac{2k}{\log(2)} q^{\frac{d+1}{2}}$. Suppose that $t_i \not=0$, $1 \leq i \leq k$, and let $\vec{t}=(t_1, \dots, t_k)$. Define 
$$ {\mathcal C}_k(\vec{t})=|\{(x^1, \dots, x^{k+1}) \in E^{k+1}: ||x^i-x^{i+1}||=t_i, 1 \leq i \leq k\}|.$$ 
Then 
$$ {\mathcal C}_k(\vec{t})=\frac{{|E|}^{k+1}}{q^k}+{\mathcal D}_k(\vec{t}),$$ where 
$$ |{\mathcal D}_k(\vec{t})| \leq \frac{2k}{\log(2)} q^{\frac{d+1}{2}} \frac{{|E|}^k}{q^k}.$$ 
\end{theorem}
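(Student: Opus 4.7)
My plan is a Fourier-analytic expansion of the chain count, organised so that each ``bad'' edge contributes an oscillating factor controlled by a classical Gauss-sum bound. The key input is the sphere Fourier estimate
$$\widehat{\nu_{t_i}}(0) = |S_{t_i}| = q^{d-1} + O(q^{(d-1)/2}), \qquad |\widehat{\nu_{t_i}}(\xi)| \le 2q^{(d-1)/2} \text{ for } \xi \ne 0,$$
which is exactly the estimate underlying Theorem \ref{IR07}. Writing $\nu_{t_i}(u) = |S_{t_i}|/q^d + \mu_{t_i}(u)$, where $\mu_{t_i}$ is the mean-zero remainder, expanding the product over $i=1,\ldots,k$, and grouping by the subset $S \subseteq \{1,\ldots,k\}$ of edges at which $\mu_{t_i}$ is selected,
$$\mathcal{C}_k(\vec{t}) = \sum_{S \subseteq \{1,\ldots,k\}} \prod_{i\notin S} \frac{|S_{t_i}|}{q^d} \cdot \Sigma_S(\vec{t}), \qquad \Sigma_S(\vec{t}) = \sum_{x^1,\ldots,x^{k+1}\in E} \prod_{i\in S}\mu_{t_i}(x^i - x^{i+1}).$$
The term $S=\emptyset$ produces the main contribution $\prod_i(|S_{t_i}|/q^d)\cdot|E|^{k+1} = |E|^{k+1}/q^k$, up to a harmless correction from $|S_{t_i}|-q^{d-1}$.

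The core of the proof is bounding $\Sigma_S(\vec{t})$ for each nonempty $S$. For $|S|=1$, say $S=\{j\}$, the sum decouples as $|E|^{k-1}\sum_{x,y\in E}\mu_{t_j}(x-y)$, and Plancherel together with the Gauss bound and $\sum_\xi|\widehat{1_E}(\xi)|^2 = q^d|E|$ gives $|\Sigma_{\{j\}}(\vec{t})| \le 2q^{(d-1)/2}|E|^k$, contributing exactly $2q^{(d+1)/2}|E|^k/q^k$ per index $j$ after multiplying by the coefficient $q^{-(k-1)}$. For $|S|\ge 2$ I aim for the analogous bound $|\Sigma_S(\vec{t})| \le (2q^{(d-1)/2})^{|S|}|E|^{k+1-|S|}$, split into two subcases: when two indices in $S$ are non-adjacent, the $\mu$-factors depend on disjoint pairs of vertices and Plancherel factorises cleanly; when $i, i+1\in S$ are consecutive, the vertex $x^{i+1}$ is shared and one applies Cauchy-Schwarz to the functions $a(y)=\sum_{x\in E}\mu_{t_i}(x-y)$ and $b(y)=\sum_{z\in E}\mu_{t_{i+1}}(y-z)$, each having $\ell^2$-norm bounded by $(4q^{d-1}|E|)^{1/2}$ via Plancherel.

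Summing over all nonempty patterns, with $\lambda := 2q^{(d+1)/2}/|E|$, the total error is dominated by
$$|\mathcal{D}_k(\vec{t})| \lesssim \sum_{s=1}^k \binom{k}{s}\lambda^{s-1} \cdot 2q^{(d+1)/2}|E|^k/q^k = \frac{2q^{(d+1)/2}|E|^k}{\lambda q^k}\bigl((1+\lambda)^k - 1\bigr).$$
The hypothesis $|E| > \frac{2k}{\log 2}q^{(d+1)/2}$ is precisely $k\lambda < \log 2$, so monotonicity of $(e^x-1)/x$ yields $\lambda^{-1}\bigl((1+\lambda)^k - 1\bigr) \le \lambda^{-1}(e^{k\lambda}-1) \le k\cdot(e^{\log 2}-1)/\log 2 = k/\log 2$, producing the stated constant $\frac{2k}{\log 2}$ and explaining the appearance of $\log 2$ in the theorem.

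The main obstacle I anticipate is extending the $|S|=2$ adjacent-case estimate to longer ``blocks'' of consecutive bad edges in $S$: iterated convolutions $\mu_{t_i}\star\mu_{t_{i+1}}\star\cdots$ do not factor under Plancherel, and one has to iterate Cauchy-Schwarz along the block while carefully bookkeeping constants to avoid spurious factors of $q$ that would destroy the geometric-series closure. All other ingredients — the Gauss bound, Plancherel's identity, and the geometric-series summation — are standard, and I expect the block-decoupling analysis to be the only place where genuine care is needed.
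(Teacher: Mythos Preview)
The paper does not prove this statement; it is quoted verbatim from the cited reference \cite{BCCHIP16} and used as a black box in the proofs of Theorems \ref{main} and \ref{mainlame}. So there is no ``paper's own proof'' to compare against.

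That said, your sketch is essentially the correct argument and matches the method of \cite{BCCHIP16}, though the original paper organises it inductively rather than via a full subset expansion: one writes $\mathcal{C}_k(\vec{t}) = \sum_{x^{k+1}\in E} (\nu_{t_k}*F_{k-1})(x^{k+1})$ with $F_{k-1}$ built from the first $k-1$ edges, splits $\nu_{t_k}$ into its mean and oscillating part, and iterates. The two presentations are equivalent after unfolding the recursion. Your identification of the adjacent-edge ``block'' as the only nontrivial step is accurate, and the resolution is exactly the operator-norm bound you hint at: for a block of $m$ consecutive $\mu$-factors one defines $H_0=1_E$, $H_j(y)=1_E(y)\sum_x H_{j-1}(x)\mu_{t_{i+j}}(x-y)$, observes $\|H_j\|_2 \le \|\mu_{t_{i+j}}*H_{j-1}\|_2 \le 2q^{(d-1)/2}\|H_{j-1}\|_2$ via Plancherel and the sphere Fourier bound, and finishes with a single Cauchy--Schwarz between $H_{m-1}$ and the last tail function. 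This gives precisely $(2q^{(d-1)/2})^m|E|$ for the block, so no iterated Cauchy--Schwarz losses accumulate. Your geometric-series closure and the extraction of the constant $2k/\log 2$ from $(1+\lambda)^k-1 \le e^{k\lambda}-1$ under $k\lambda<\log 2$ are correct and recover the stated bound. The only loose end in your write-up is absorbing the discrepancy $|S_{t_i}|-q^{d-1}=O(q^{(d-1)/2})$ into the error term, which is routine.
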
 

\vskip.125in 

The existence of a chain of length $4$ ($4$ edges and $5$ vertices) with gap $t \not=0$ follows from this immediately, provided that $|E| \ge Cq^{\frac{3}{2}}$, but we need to work a bit to make sure that we can find such a chain with $||z^{j+1}-z^j||=t$, $j=1,2,3,4$, $||z^1-z^4|| \not=t$, $||z^2-z^5|| \not=t$. To this end, we are going to show that 
\begin{equation} \label{degeneratechain}
	|\{(z^1,z^2,z^3,z^4,z^5) \in E^5: ||z^{i+1}-z^i||=t, 1 \leq i \leq 4, ||z^1-z^4||=t \}| \leq C'{|E|}^3 q^{-1} \end{equation}
if $|E| \ge Cq^{\frac{3}{2}}$. This suffices since by Theorem \ref{BCCHIP16}, 
$$
	|\{(z^1,z^2,z^3,z^4,z^5) \in E^5: ||z^{i+1}-z^i||=t, 1 \leq i \leq 4 \}| \ge C'' {|E|}^5q^{-4},
$$
and, by $|E| \geq C q^{\frac32}$. 

To prove (\ref{degeneratechain}), observe that the left hand side of (\ref{degeneratechain}) is equal to 
\begin{multline*}
\sum_{x,y,z} E(x)E(y) {\left( \sum_u E(u) S_t(x-u) S_t(y-u) \right)}^2 E(z) S_t(y-z) \\
= \sum_{x,y,z; x \not=y} E(x)E(y) {\left( \sum_u E(u) S_t(x-u) S_t(y-u) \right)}^2 E(z) S_t(y-z) \\
+ \sum_{x,u,v,z} E(x)E(u)E(v)E(z)S_t(x-u) S_t(x-v)S_t(x-z)=I+II.
\end{multline*} 

It is not difficult to see that 

\begin{equation} \label{almost} I \leq 2 \sum_{x,y,u,z} E(x)E(y) E(u) E(z) S_t(x-u) S_t(y-u) S_t(y-z) \end{equation} since $x \not=y$ and two circles intersect at at most two points. On the other hand, 
\begin{equation} \label{almost2} II \leq (q+1) \sum_{x,u,v} E(x)E(u)E(v) S_t(x-u) S_t(x-v) \end{equation} since 
$$ \sum_z E(z) S_t(y-z) \leq |S_t| \leq q+1.$$ 

The expression (\ref{almost}) is $\leq C'{|E|}^4q^{-3} \leq C'|E|^3 q^{-1}$ if $|E| \ge Cq^{\frac{3}{2}}$ by Theorem \ref{BCCHIP16} above, and the expression (\ref{almost2}) is $\leq C'' {|E|}^3 q^{-1}$ if $|E| \ge Cq^{\frac{3}{2}}$, also by Theorem \ref{BCCHIP16}, so the claim is proved and we have established that the VC-dimension is at least two.

\vskip.125in

To show the VC-dimension is at most $3$, we claim no subset $\{x^1,x^2,x^3,x^4\}$ of size $4$ can be shattered by ${\mathcal H}_t^2({\Bbb F}_q^2)$, let alone ${\mathcal H}_t^2(E)$. If there were, all four points would be forced to live on the same circle centered at $y^{1234}$, say, and at the same time, there must exist $y^{123}$ such that $x^1,x^2,x^3$ live on a circle of radius $t$ centered at $y^{123}$, while $x^4$ does not. This is impossible since three points determine a circle.

\vskip.25in

\section{Proofs of Theorem \ref{main}}

We know already the VC-dimension of ${\mathcal H}^2_t(E)$ is at most $3$ from the argument in the previous paragraph. Now we must show that there exists $C$ of size $3$ that is shattered by ${\mathcal H}^2_t(E)$. This leads to the following question. Do there exist $x^1,x^2,x^3, y^{123}, y^{12}, y^{13}, y^{23}, y^1, y^2, y^3, y^0 \in E$, such that $||x^i-y^{123}||=||x^i-y^{ij}||=||x^i-y^i||=t$, $i,j=1,2,3$, and all the remaining pair-wise distances between $x$'s and $y$'s do not equal $t$? 

There are several results in literature that prove the existence of a general point configurations in ${\Bbb F}_q^d$ inside sufficiently large sets. Let $G$ be a graph and let $t \not=0$ be given. We say that $G$ can be embedded in $E \subset {\Bbb F}_q^d$, if there exist $x^1, \dots, x^{k+1} \in E$ such that $||x^i-x^j||=t$ for $(i,j)$ corresponding to the pairs of vertices connected by edges in $G$. The second listed author and Hans Parshall proved in \cite{IP19} that if the maximum vertex multiplicity in $G$ is equal to $t$ and $|E| \ge Cq^{\frac{d-1}{2}+t}$, $E \subset {\Bbb F}_q^d$, $d \ge 2$, then $G$ can be embedded in $E$. In the case of the configuration above, $t=4$, so the threshold exponent in \cite{IP19} is $\frac{1}{2}+4>2$, so very different methods are required in this situation. 

\vskip.125in 

%

We shall need the following existence lemma for rhombi. 

\begin{lemma} \label{4loops} Suppose that $|E| \ge 4q^{\frac{7}{4}}$, $t \not=0$, and $v$ is a non-zero vector in ${\Bbb F}_q^2$. Then there exist distinct $x,y,z,w \in E$ such that 
$$||x-y||=||y-z||=||z-w||=||w-x||=t,$$ and neither $x-y$ nor $y-z$ is equal to $\pm v$. 
\end{lemma}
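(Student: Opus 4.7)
The plan is to estimate the count of ordered rhombi
\[
R = \sum_{x,y,z,w \in E} S_t(x-y)\,S_t(y-z)\,S_t(z-w)\,S_t(w-x),
\]
then subtract the degenerate $4$-tuples and those in which a forbidden direction $\pm v$ appears on one of the first two sides. The approach is Fourier-analytic: expanding each $S_t$ via $S_t(u)=q^{-2}\sum_\xi \hat S_t(\xi)\chi(\xi\cdot u)$ and using the standard bounds $\hat S_t(0)=|S_t|=q\pm 1$ and $|\hat S_t(\xi)|\le 2q^{1/2}$ for $\xi\neq 0$ (Kloosterman--Sali\'e), one isolates a main term of size $|S_t|^2|E|^4/q^6\sim |E|^4/q^4$ coming from the all-zero Fourier frequencies, plus error terms indexed by which non-trivial frequencies appear.

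I would next bound the bad configurations. Since $t\neq 0$, the only possible vertex-coincidences compatible with the four distance equations are $x=z$ and $y=w$; each contributes at most a constant multiple of the number of $2$-paths of step $t$ in $E$, which by Theorem~\ref{BCCHIP16} is $\lesssim |E|^3/q^2$. Rhombi for which $x-y\in\{\pm v\}$ or $y-z\in\{\pm v\}$ are zero in number unless $\|v\|=t$; when $\|v\|=t$, pinning such an edge fixes a translation parameter and reduces the remaining problem to a $3$-chain count bounded by $\lesssim |E|^3/q^2$ via Theorem~\ref{BCCHIP16} again.

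The main obstacle is extracting a lower bound on $R$ strong enough to beat these bad counts at the sharp threshold $|E|\ge 4q^{7/4}$; at this level, the bad-configuration bound $|E|^3/q^2 = q^{13/4}$ actually exceeds the naive Fourier main term $|E|^4/q^4 = q^3$, so a crude subtraction fails. To get past this, I would exploit the structural fact that any $4$-cycle in the distance-$t$ graph on $\mathbb F_q^2$ (odd $q$) is a parallelogram: if $y,w$ are the two intersection points of the circles $S_t(x)$ and $S_t(z)$, a direct computation with the perpendicular bisector yields $y+w=x+z$. This rewrites $R$ as a parallelogram count
\[
R = \sum_{a,b\in S_t}\ \bigl|\{x\in E:\ x,\,x+a,\,x+b,\,x+a+b\in E\}\bigr|,
\]
which admits a more refined Fourier analysis based on higher moments of $\hat E$, in the spirit of the $q^{7/4}$-threshold estimates for equilateral triangles (Bennett--Hart--Iosevich--Pakianathan--Rudnev and related works). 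Once the sharpened lower bound on $R$ is in place, the bad contributions of the previous paragraph are dominated and the lemma follows.
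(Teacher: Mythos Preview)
Your proposal has a genuine gap. You correctly observe that the naive Fourier count of rhombi gives a main term of order $|E|^4/q^4$, while the degenerate and forbidden-direction contributions you must subtract are of order $|E|^3/q^2$; at the threshold $|E|=4q^{7/4}$ the latter dominates the former, so the subtraction argument fails. Your proposed remedy---exploit the parallelogram structure of $4$-cycles and appeal to a ``refined Fourier analysis based on higher moments of $\hat E$, in the spirit of'' the $q^{7/4}$ triangle results---is not carried out. The parallelogram identity $y+w=x+z$ is correct, but you do not actually produce any improved lower bound on $R$, and the triangle literature you cite does not directly furnish one for this configuration. As written, the argument is conditional on an unproved estimate.

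The paper sidesteps the rhombus count entirely with a short, purely combinatorial argument. First, by Theorem~\ref{IR07} there are at least $|E|^2/(2q)$ ordered pairs $(x,y)\in E\times E$ with $\|x-y\|=t$, and at most $2|E|$ of them have $x-y=\pm v$; pigeonholing over the at most $q+1$ directions on $S_t$ yields some $u\neq\pm v$ with $\|u\|=t$ and at least $|E|^2/(4q^2)$ pairs satisfying $x-y=u$. Set $E'=\{x\in E:\ x-u\in E\}$; the hypothesis $|E|\ge 4q^{7/4}$ gives $|E'|\ge |E|^2/(4q^2)\ge 4q^{3/2}$. A second application of Theorem~\ref{IR07} inside $E'$ produces $(x,w)\in E'\times E'$ with $\|x-w\|=t$ and $x-w\notin\{\pm v,\pm u\}$ (the excluded directions account for at most $4|E'|$ pairs). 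Setting $y=x-u$ and $z=w-u$ yields the desired rhombus with both relevant sides avoiding $\pm v$ and all four vertices distinct. This two-step pigeonhole avoids any direct $4$-cycle estimate and explains exactly why the exponent $7/4$ appears: it is precisely what is needed so that $|E|^2/(4q^2)\ge 4q^{3/2}$.
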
 

\vskip.125in 

\begin{proof}
We first claim that less than half the pairs in $\{(x,y) \in E \times E : \|x - y\| = t\}$ satisfy $x - y = \pm v$. This follows from
\begin{multline*}
	|\{(x,y) \in E \times E : x - y = \pm v \}| \\ 
	\leq 2|E| < \frac{|E|^2}{4q} \leq \frac12 |\{(x,y) \in E \times E : \|x - y\| = t\}|,
\end{multline*}
where the second inequality follows from $|E| > 8q$, and the third follows from $|E| \geq 4q^\frac32$ and Theorem \ref{IR07}. By pidgeonholing on the remaining directions, there exists $u$ with $\|u\| = t$ and $u \neq \pm v$ for which
\[
	|\{(x,y) \in E \times E : x - y = u \}| \geq \frac{|E|^2}{4q^2}.
\]
Let $E'$ denote the collection of $x$'s from the set above. The hypothesis $|E| \geq 4 q^\frac74$ ensures
\[
	|E'| \geq \frac{|E|^2}{4q^2} \geq 4q^{\frac32},
\]
and so Theorem \ref{IR07} guarantees there are at least $\frac{|E'|^2}{2q^2}$ pairs $(x,w) \in E' \times E'$ with $\|x - w\| = t$. Next, we must ensure $x - w \neq \pm v$ nor $\pm u$. By proceeding as above, we find
\begin{multline*}
	|\{(x,w) \in E' \times E' : x - w = \pm v, \pm u\}| \\
	\leq 4|E'| < \frac{|E'|^2}{2q} \leq |\{(x,w) \in E' \times E' : \|x - w\| = t\}|,
\end{multline*}
where the second inequality follows since $|E'| > 8 q$, and the third follows again from Theorem \ref{IR07}. Hence, there exists some pair $(x,w)$ in the right-hand set but not the left-hand set.

To summarize, we have found $(x,y,w,z) \in E^4$ for which $\|x - y\| = \|x - w\| = \|w - z\| = \|y - z\| = t$, a rhombus. Furthermore, none of these four sides are parallel to $v$ by construction. Finally, all four points are distinct since $w - y = (x - y) - (x - w) = u - (x - w) \neq 0$ and $x - z = (x - w) + (w - z) = (x - w) + u \neq 0$.
\end{proof} 

%
%

\vskip.125in 

%
%
%
%
%

%
%
%
%

We shall also need the following pigeon-holing observation. 

\begin{lemma} \label{pigeon} Let $E$ be as in the statement of Theorem \ref{main}. Then for any nonzero $t\in \mathbb{F}_q$, there exists $v \in {\Bbb F}_q^2$, $||v||=t$, such that 
$$ |E \cap (E-v)| \ge \frac{1}{2}|E|^2q^{-2}.$$ 
\end{lemma}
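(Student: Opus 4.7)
The proof is a direct double-counting argument combining Theorem \ref{IR07} with pigeonholing over the sphere $S_t = \{v \in \mathbb{F}_q^2 : \|v\| = t\}$. The plan is as follows.

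First, I would write the identity
\[
	\sum_{v \in S_t} |E \cap (E - v)| = |\{(x,y) \in E \times E : \|x - y\| = t\}|,
\]
which holds simply because fixing $v = x - y$ parametrizes the pairs on the right.

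Next, I would invoke Theorem \ref{IR07} to bound the right-hand side from below. Since $|E| \geq C q^{15/8}$ is much larger than $4 q^{3/2}$ (for $C$ sufficiently large), the theorem gives
\[
	|\{(x,y) \in E \times E : \|x - y\| = t\}| \geq \frac{|E|^2}{q} - 2 q^{1/2} |E| \geq (1 - o(1)) \frac{|E|^2}{q},
\]
where the $o(1)$ term is controlled by $q^{3/2}/|E| \leq C^{-1} q^{-3/8}$.

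Finally, I would pigeonhole: the sphere $S_t$ in $\mathbb{F}_q^2$ has cardinality at most $q + 1$ (it is standard that $|S_t| = q - \chi(-1)$ for $t \neq 0$, where $\chi$ denotes the quadratic character), so there exists $v \in S_t$ with
\[
	|E \cap (E - v)| \geq \frac{1}{|S_t|} \cdot (1 - o(1)) \frac{|E|^2}{q} \geq \frac{(1-o(1))|E|^2}{q(q+1)} \geq \frac{|E|^2}{2 q^2},
\]
the last inequality holding once $C$ (equivalently $q$) is chosen large enough to absorb the $(1 - o(1))$ factor and the discrepancy between $q+1$ and $q$. No step here should be a serious obstacle; the entire content of the lemma is that the circle-count lower bound from \cite{IR07} survives the averaging over $S_t$ with only a loss of a factor of $2$, and this is exactly where the lower bound $|E| \geq C q^{15/8}$ is more than strong enough (any $|E| \gg q^{3/2}$ would work for this particular lemma).
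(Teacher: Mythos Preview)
Your proposal is correct and mirrors the paper's own argument: both use Theorem~\ref{IR07} to get $|\{(x,y)\in E\times E:\|x-y\|=t\}|\ge \tfrac{|E|^2}{2q}$ and then pigeonhole over the at most $q+1$ directions in $S_t$. The only difference is that you spell out the identity $\sum_{v\in S_t}|E\cap(E-v)|=|\{(x,y):\|x-y\|=t\}|$ and track the constants a bit more explicitly, which if anything makes the argument clearer.
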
 

\begin{proof} Using Theorem \ref{IR07} once again, we see that if $|E| \ge Cq^{\frac{15}{8}}$, then 
$$ |\{(x,y) \in E \times E: ||x-y||=t \}| \ge \frac{{|E|}^2}{2q}.$$ 
Since the circle of non-zero radius has at most $q+1$ points, the conclusion follows. 
\end{proof}

\vskip.125in 

It follows from the assumptions of Theorem \ref{main} and Lemma \ref{pigeon} that there exists $v\in \mathbb{F}_q^d$, $||v||=t\neq 0$, such that 
\begin{equation} \label{pigeonquantity} |E \cap (E-v)| \ge 4q^{\frac{7}{4}}. \end{equation}
Using Lemma \ref{pigeon} and Lemma \ref{4loops}, we see that there exist distinct $x,y,z,w \in E \cap (E-v)$, with $v$ from (\ref{pigeonquantity}) such that 
$$ ||x-y||=||y-z||=||z-w||=||x-w||=t, $$ where 
$$\pm v \not=x-y, y-z, z-w, x-w.$$
We are now ready to move into the final phase of the proof of Theorem \ref{main}. 

\vskip.125in 

Let $y^{123}=y$, $x^1=x$, $x^2=y+v$, $x^3=z$, $y^{12}=x+v$, $y^{23}=z+v$, $y^{13}=w$. Note that 
$$ ||y^{123}-x^i||=t, \ i=1,2,3,$$ 
$$ ||y^{12}-x^i||=t, \ i=1,2,$$ 
$$ ||y^{13}-x^i||=t, \ i=1,3,$$ 
$$ ||y^{23}-x^i||=t, \ i=2,3.$$
See Figure \ref{vcdim3figure} for reference. To see that $||y^{ij}-x^k||\neq t$ when $k\neq i,j$, note that otherwise the circles of radius $t$ centered at $y^{ijk}$ and $y^{ij}$ would intersect at $x^1,x^2$, and $x^3$, implying that $y^{ij}=y^{ijk}$, contradicting the construction. Since $|E| \gg q$, we may also select $y^0$ for which $\|y^0 - x^i\| \neq t$ for each $i = 1,2,3$.

\begin{figure}
\includegraphics[width=0.9\textwidth]{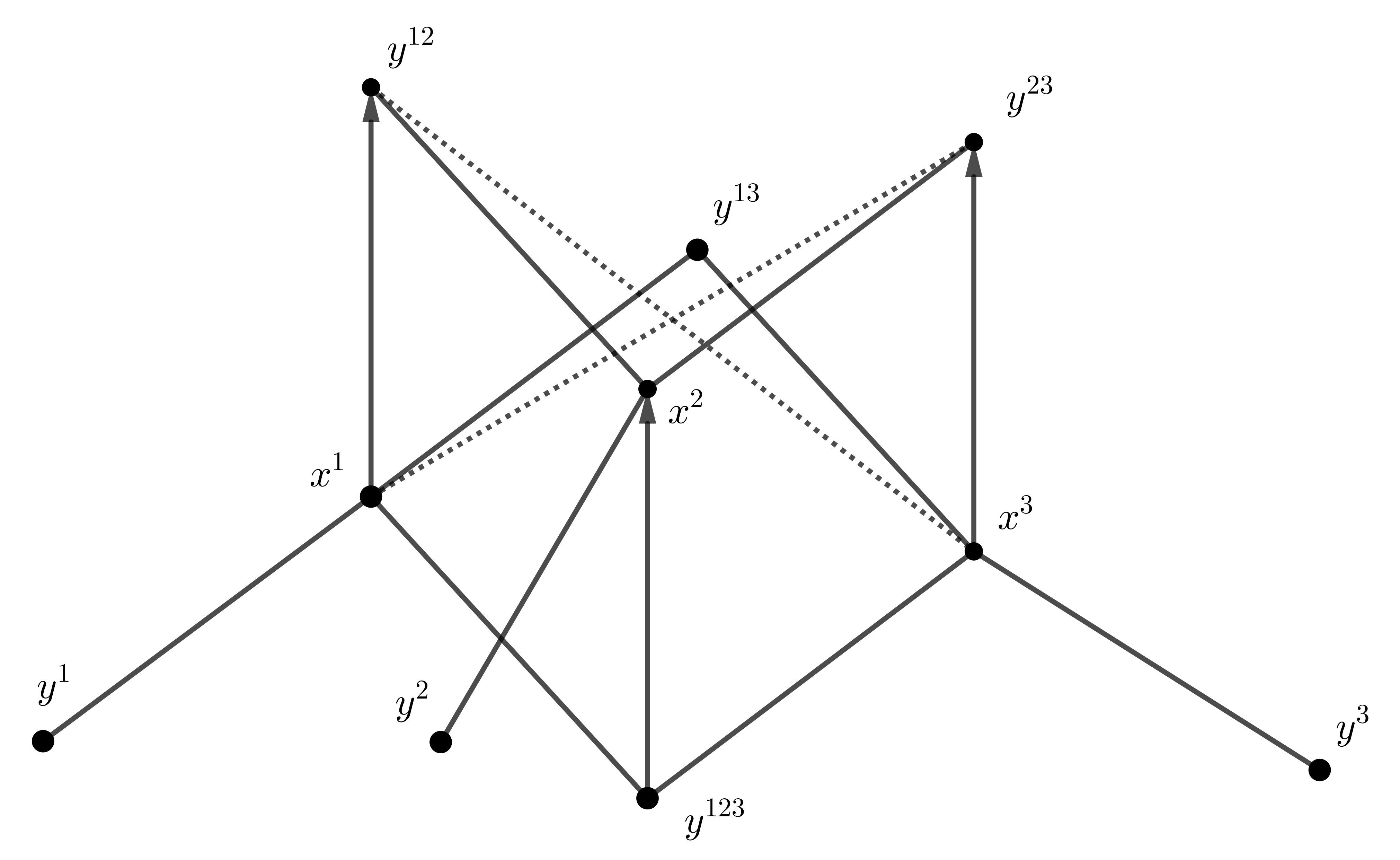}
\caption{Points adjoined by a solid line are separated by a distance $t$, and those joined by a dotted line are separated by a distance $\neq t$. It is not marked by a dotted line, but the distances between points $y^i$ and $x^j$ for $i \neq j$ are $\neq t$. The vertical lines decorated by arrows denote the vector $v$ in the construction.}
\label{vcdim3figure}
\end{figure}

\vskip.125in

We are almost there, but we still need to come up with $y^1, y^2, y^3$ such that $||x^i-y^i||=t$, and we need to make sure that $||x^i-y^j|| \not=t$, $i \not=j$. This is where we now turn our attention. 

\vskip.125in

$E\ast S_t(x)=\left|\{y\in E: \ \|x-y\|=t\}\right|$, and so by Theorem \ref{IR07}, if $|E|>4q^{\frac{3}{2}}$ then

$$
\sum_{x\in E}{E\ast S_t(x)}\geq \frac{1}{2}|E|^2q^{-1}
$$
Moreover, if $|E|>4q^{\frac{3}{2}}$ then $|E|>4\cdot 99q$ for $q\geq 99^2$, and thus $99|E|\leq \frac{1}{4}|E|^2q^{-1}$.  Therefore,
$$
\sum_{x\in E}{E\ast S_t(x)}\leq 99\left|\{x\in E: E\ast S_t(x) \leq 99\}\right|+\sum_{\substack{x\in E \\ E\ast S_t(x)\geq 100}}{E\ast S_t(x)}
$$
$$
\leq \frac{1}{4}|E|^2q^{-1}+\sum_{\substack{x\in E \\ E\ast S_t(x)\geq 100}}{E\ast S_t(x)},
$$
and so 
$$
\sum_{\substack{x\in E \\ E\ast S_t(x)\geq 100}}{E\ast S_t(x)}\geq \frac{1}{4}|E|^2q^{-1}.
$$
By Cauchy-Schwarz,
\begin{multline*}
\frac{1}{16}|E|^4q^{-2}\leq \left(\sum_{\substack{x\in E \\ E\ast S_t(x)\geq 100}}{E\ast S_t(x)}\right)^2 \\
\leq \left|\{x\in E: \ E\ast S_t(x)\geq 100\}\right|\left(\sum_{x\in E}{(E\ast S_t(x))^2}\right).
\end{multline*}
But 
$$
\sum_{x\in E}{(E\ast S_t(x))^2}
=\sum_{x,y,z}{E(y)E(z)S_t(x-y)S_t(x-z)}
$$
is the number of paths of length 2 (2 edges and 3 vertices) in the distance graph of $E$.  By Theorem \ref{BCCHIP16}, if $|E|>\frac{4}{\log{2}}q^{\frac{3}{2}}$ then the number of paths of length 2 is $\leq 2\frac{|E|^3}{q^2}$.  Therefore, 
$$
\frac{1}{16}|E|^4q^{-2}\leq 2\frac{|E|^3}{q^2}\left|\{x\in E: \ E\ast S_t(x)\geq 100\}\right|,
$$
and
$$
\left|\{x\in E: \ E\ast S_t(x)\geq 100\}\right|\geq \frac{1}{32}|E|.
$$
Recall that whenever $|E|\geq Cq^{\frac{15}{8}}$, we have constructed a configuration 
$$\{x^1,x^2,x^3,y^{123},y^{12},y^{13},y^{23},y^0\}$$ with the desired edges in the distance graph (see Figure \ref{vcdim3figure}).  In particular, provided the constant $C$ is large enough, we can construct such a configuration in $E':=\left|\{x\in E: \ E\ast S_t(x)\geq 100\}\right|$, a subset of $E$ in which every vertex has degree at least 100 in the distance graph on $E$.  In particular $x^1,x^2,x^3$ each have degree at least 100, so they each have at least one neighbor in addition to the ones listed, i.e. there exist distinct $y^1,y^2,y^3$ with 
$$
y^1,y^2,y^3\notin \{x^1,x^2,x^3,y^{12},y^{13},y^{23},y^{123}\},
$$
and $||y^i-x^i||=t$ for $i=1,2,3$, and $\|y^i - x^j\| \neq t$ for $i \neq j$.

%


\newpage


\begin{thebibliography}{8}


\bibitem{BCCHIP16} M. Bennett, J. Chapman, D. Covert, D. Hart, A. Iosevich and J. Pakianathan, {\it Long paths in the distance graph over large subsets of vector spaces over finite fields}, J. Korean Math. Soc. \textbf{53}, (2016). 

\bibitem{DS14} S. Shalev-Shwartz and S. Ben-David, {\it Understanding Machine Learning: From Theory to Algorithms}, Cambridge University Press, (2014). 

\bibitem{GIJMMSW21} N. Grand, A. Iosevich, M. Juvekar,   A. Mayeli, B. McDonald, M. Sun, N. Whybra and E. Wyman, {\it VC-dimension, distances, dot products, and configurations in ${\Bbb F}_q^d$}, (in preparation), (2021). 

\bibitem{IP19}  A. Iosevich and H. Parshall, {\it Embedding distance graphs in finite field vector spaces}, J. Korean Math. Soc. \textbf{56} (2019), no. 6, 1515-1528.

\bibitem{IR07} A. Iosevich and M. Rudnev, {\it Erd\H os distance problem in vector spaces over finite fields}, Trans. Amer. Math. Soc. \textbf{359} (2007), no. 12, 6127-6142.



\end{thebibliography}
\end{document}